\theoremstyle{plain}
\newtheorem{definition}{Definition}[section]
\newtheorem{proposition}[definition]{Proposition}
\newtheorem{theorem}[definition]{Theorem}
\newtheorem{lemma}[definition]{Lemma}
\theoremstyle{definition}
\newtheorem{example}[definition]{Example}
\newcommand{\Z}{\mathbb{Z}}
\newcommand{\G}{{\mathcal{G}}}
\newcommand{\link}{\operatorname{link}}
\renewcommand{\star}{\operatorname{star}}
\newcommand{\astar}{\operatorname{astar}}
\title[On the connectivity of manifold graphs]{On the connectivity of \\ manifold graphs}
\author{Anders Bj\"orner}
\address{Royal Institute of Technology, Department of Mathematics,
S-100 44 Stockholm, Sweden}
\email{bjorner@math.kth.se}
\author{Kathrin Vorwerk}
\address{Royal Institute of Technology, Department of Mathematics,
S-100 44 Stockholm, Sweden}
\email{vorwerk@math.kth.se}
\thanks{{Research supported by the Knut and Alice Wallenberg Foundation,
grant KAW.2005.0098}}
\subjclass{}
\keywords{}
\begin{document}

\begin{abstract}
This paper is concerned with lower bounds for the connectivity of graphs
(one-dimensional skeleta) of triangulations of compact manifolds.
We introduce a structural invariant $b_{\Delta}$ of a simplicial $d$-manifold $\Delta$
taking values in the range $0\le b_{\Delta} \le d-1$. 
The main result is that $b_\Delta$ influences connectivity  in the following way:
\emph{
The graph of a $d$-dimensional
simplicial compact manifold $\Delta$ is $(2d-b_{\Delta})$-connected.} 

The parameter $b_{\Delta}$
has the property that  $b_{\Delta} =0$ if the complex $\Delta$ is flag.
Hence, our result interpolates between Barnette's theorem (1982) that all $d$-manifold graphs are $(d+1)$-connected 
 and Athanasiadis' theorem (2011) that flag $d$-manifold graphs are $2d$-connected.
 
The definition of $b_{\Delta}$ involves the
concept of {\em banner} triangulations of manifolds, a generalization of flag triangulations.
\end{abstract}

\maketitle

\section{Introduction}

Consider a pure $d$-dimensional polyhedral complex $\Delta$.
The {\em graph $\G(\Delta)$}, or {\em $1$-skeleton}, of $\Delta$ is the undirected simple graph that has the 
vertices of $\Delta$ as nodes and the one-dimensional faces of $\Delta$ as edges.

The study of graph-theoretic  connectivity of skeleta of 
polyhedral  complexes has its beginning with Steinitz' Theorem from 1922, 
which states that a graph is the $1$-skeleton of the boundary
complex of
some $3$-dimensional convex polytope if and only if it is $3$-connected and planar. 
Later, Balinski \cite{Balinski1961} generalized 
part of this to higher dimensions by showing that the graph of every $(d+1)$-dimensional 
convex polytope is $(d+1)$-connected.
This was generalized further by Barnette \cite{Barnette1982} who showed that the 
graph of every $d$-dimensional polyhedral 
pseudomanifold is $(d+1)$-connected. 

In this paper we consider the simplicial case. 
A simplicial pseudomanifold is said to be {\em flag} if its faces coincide with the cliques
of its $1$-skeleton, and it is said to be {\em normal} if
all links of faces are connected.
It was shown by Athanasiadis \cite{Athanasiadis2011} that the graph of a flag
simplicial $d$-pseudomanifold is $2d$-connected.

We introduce an invariant $b_{\Delta}$ of a pure $d$-dimensional simplicial complex
$\Delta$ taking values in the range $0\le b_{\Delta} \le d-1$. 
It is shown to affect connectivity in the following way.

\begin{theorem} \label{main}
Let ${\Delta}$ be a $d$-dimensional normal simplicial pseudomanifold. 
Then the graph $\G(\Delta)$ is $(2d-b_{\Delta})$-connected.
\end{theorem}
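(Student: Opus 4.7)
The plan is to induct on the dimension $d$, generalising Athanasiadis' argument for the flag case to the banner setting while tracking the invariant $b_\Delta$. The endpoints are already covered by existing results: $b_\Delta=0$ reduces to Athanasiadis' $2d$-connectivity for flag pseudomanifolds, while $b_\Delta=d-1$ reduces to Barnette's $(d+1)$-connectivity. The new content lies in the intermediate values; the base case $d=1$ is immediate since a normal $1$-pseudomanifold is a cycle, and small-$d$ cases with $2d-b_\Delta \leq d+1$ are covered by Barnette.

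For the inductive step, suppose for contradiction that $S\subset V(\Delta)$ with $|S| < 2d-b_\Delta$ disconnects $\G(\Delta)$, and let $C$ be a component of $\G(\Delta)\setminus S$. For any $v\in C$, the link $\link_\Delta(v)$ is a normal $(d-1)$-pseudomanifold whose vertex set consists of the neighbours of $v$ and is thus contained in $C\cup S$. By the inductive hypothesis, $\G(\link(v))$ is $(2(d-1)-b_{\link(v)})$-connected, so $S\cap \link(v)$ can separate it only if $|S\cap \link(v)|\geq 2(d-1)-b_{\link(v)}$. One then has to combine this local bound with a global accounting of the separator vertices across several choices of $v$, and push the argument up from vertex-links to edge-links and higher-face-links straddling the cut, using normality at each step to preserve connectedness of the subcomplexes involved. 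The desired lower bound $|S|\geq 2d-b_\Delta$ should emerge from this double-counting, yielding the required contradiction.

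The main obstacle is controlling the behaviour of the banner invariant under taking links. To close the induction one needs an inequality of the type $b_{\link(v)}\leq b_\Delta - 1$ for some well-chosen $v$, so that the local connectivity bound on $\G(\link(v))$ is strong enough to absorb the vertex $v$ into the accounting. I expect this monotonicity to follow from the definition of $b_\Delta$ in terms of minimal non-faces whose vertex sets are cliques of $\G(\Delta)$: passing to $\link(v)$ eliminates every such non-face containing $v$, which for a suitable $v$ strictly decreases the deviation from flag-ness. A subsidiary difficulty, absent in the flag case thanks to the face-clique dictionary, is that banner complexes admit $1$-skeleton cliques that are not faces, so the standard \emph{find a face inside $S$} trick of Athanasiadis must be replaced by something more delicate; the parameter $b_\Delta$ should be calibrated precisely to quantify this gap and to feed back into the induction.
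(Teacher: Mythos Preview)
Your plan has a genuine gap at the base case. You write that $b_\Delta=0$ ``reduces to Athanasiadis' $2d$-connectivity for flag pseudomanifolds,'' but $b_\Delta=0$ means $\Delta$ is \emph{banner}, which is strictly weaker than flag. Athanasiadis' theorem does not apply, and establishing $2d$-connectivity in the banner case is where the bulk of the work lies (Theorem~\ref{thm:bannergraph} in the paper). The key extra ingredient beyond the link induction is an ``outside path'': for $y,z$ at distance two with common neighbour $x$, one needs a $y$--$z$ path whose interior avoids $\star_\Delta(x)$ entirely. That such a path exists requires knowing that the induced subcomplex on $V\setminus N(x)$ is nonempty and connected (Lemmas~\ref{lem:banner_1} and~\ref{prop:banner_outsidepath}), which in turn uses the banner hypothesis together with Barnette's lemma on strong connectivity of antistars. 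Nothing in your sketch addresses this, and your ``find a face inside $S$'' remark suggests you are still thinking in terms of the flag dictionary, which is precisely what fails here.

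Two smaller points. First, your speculation about the monotonicity $b_{\link_\Delta(v)}\le b_\Delta-1$ is based on a wrong picture: $b_\Delta$ is \emph{not} defined via minimal non-face cliques but as the least $j$ such that every link of a $j$-element face is banner (or $C_3$). From that definition the inequality is immediate for \emph{every} vertex $v$ whenever $b_\Delta\ge 1$, since $\link_{\link_\Delta(v)}(\tau)=\link_\Delta(\tau\cup\{v\})$; no ``well-chosen'' $v$ is needed (Lemma~\ref{lem:tentlinks}). Second, the paper avoids your separator/double-counting scheme entirely by using Liu's criterion (Lemma~\ref{liu}): it suffices to exhibit $2d-b_\Delta$ independent paths between any pair $y,z$ at distance two. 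Choosing a common neighbour $x$, induction on $b_\Delta$ (with the banner case as base) already supplies $2d-b_\Delta-1$ independent $y$--$z$ paths inside $\G(\link_\Delta(x))$, and the path $y\to x\to z$ is the last one. The vague ``global accounting across several choices of $v$'' is unnecessary.
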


The definition of $b_{\Delta}$ involves the
concept of {\em banner}  triangulations, a generalization of
flag triangulations. We have that $b_{\Delta} =0$ if and only if the complex ${\Delta}$ is banner,
and in particular if $\Delta$ is flag. Thus, for the case of normal simplicial pseudomanifolds
our result interpolates between the theorems of Barnette
and Athanasiadis. 

\section{Preliminaries}

In this section we collect some definitions and review some auxiliary results needed later in the paper.

\subsection{Simplicial complexes}

We assume basic knowledge about simplicial complexes.
Throughout the paper, $\Delta$ will denote a pure finite
$d$-dimensional simplicial complex on vertex set $V$.

Let $\tau$ be a face of $\Delta$. The {\em link} of $\tau$ in $\Delta$, 
denoted $\link_\Delta(\tau)$,
is the subcomplex 
that contains a face $\sigma \in \Delta$ if 
$\sigma\cap\tau=\emptyset$ and $\sigma\cup\tau \in \Delta$. 

Let $x \in V$ be a vertex of $\Delta$. 
The {\em closed star} $\star_\Delta(x)$ is the subcomplex of $\Delta$ which is the cone over $\link_\Delta(x)$ with apex $x$.
The 
 {\em antistar} $\astar_\Delta(x)$ is the subcomplex of $\Delta$ induced
 on the set of vertices $V\setminus \{x\}$. Note that
 $\star_\Delta(x) \cap \astar_\Delta(x) = \link_\Delta(x)$.

A pure $d$-dimensional  complex $\Delta$ is a {\em pseudomanifold} if 
\begin{itemize}
\item[(i)] every $(d-1)$-dimensional  face is contained in exactly two facets (maximal faces),
\item[(ii)] $\Delta$ is strongly connected, 
meaning that the {\em facet graph} (whose vertices are the facets and edges the pairs of adjacent facets) of $\Delta$ is connected.
\end{itemize}

We use the following property of pseudomanifolds at a crucial point in the paper. 

\begin{lemma}[{\cite[Lemma 2]{Barnette1982}}]
\label{lem:astar_sc}
The antistar of any vertex in a pseudomanifold is strongly connected.
\end{lemma}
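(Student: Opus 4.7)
The plan is to lift strong connectivity of $\Delta$ itself to $\astar_\Delta(x)$: starting from a facet path in $\Delta$ between two facets of $\astar_\Delta(x)$, I would reroute it around every facet containing $x$, using the local structure of codimension-$2$ links in a pseudomanifold.

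First I record the key preliminary: \emph{if $\tau$ is a $(d-1)$-face of $\Delta$ not containing $x$, then both facets of $\Delta$ on $\tau$ avoid $x$.} Otherwise, both would equal $\tau \cup \{x\}$, contradicting that $\tau$ lies in two distinct facets. Consequently, the $d$-dimensional facets of $\astar_\Delta(x)$ are precisely the facets of $\Delta$ avoiding $x$; and for each facet $F \ni x$ of $\Delta$, the $(d-1)$-face $F \setminus \{x\}$ determines a unique ``escape'' facet $\phi(F) \neq F$, which itself avoids $x$.

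Given two facets $F, F'$ of $\astar_\Delta(x)$, I would pick a facet path $F = G_0, G_1, \dots, G_n = F'$ in $\Delta$ and replace each maximal subpath $G_j, \dots, G_k$ whose facets all contain $x$ by a detour. Whenever $G_{i-1}$ avoids $x$ and $G_i \ni x$, their shared $(d-1)$-face equals $G_i \setminus \{x\}$, so $G_{i-1} = \phi(G_i)$: the path naturally enters and exits each run through escape facets. For consecutive $G_i, G_{i+1}$ in the interior of a run, the shared $(d-1)$-face must contain $x$ (else the two facets would coincide), so equals $\sigma \cup \{x\}$ for a $(d-2)$-face $\sigma$. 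At this point the pseudomanifold hypothesis forces $\link_\Delta(\sigma)$ to be a $1$-dimensional complex in which every vertex lies in exactly two edges, hence a disjoint union of cycles. Writing $G_i = \sigma \cup \{x, u\}$ and $G_{i+1} = \sigma \cup \{x, v\}$, the vertex $x$ has neighbours $u$ and $v$ in the cycle it occupies, and the other arc of that cycle provides a path from $u$ to $v$ avoiding $x$. Its edges correspond to facets of $\Delta$ that contain $\sigma$ but not $x$, consecutive ones sharing $(d-1)$-faces of the form $\sigma \cup \{w\}$, yielding a facet path in $\astar_\Delta(x)$ from $\phi(G_i)$ to $\phi(G_{i+1})$.

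Concatenating these detours replaces every maximal $x$-containing subpath and produces a facet path from $F$ to $F'$ inside $\astar_\Delta(x)$. The main obstacle I anticipate is the bookkeeping at the ends of each run together with the degenerate cases: a link cycle of length $3$, where the detour collapses to the single facet $\phi(G_i) = \phi(G_{i+1})$; and a run of length one, where the detour reduces to the identity $G_{j-1} = \phi(G_j) = G_{j+1}$ by two applications of the preliminary observation and uniqueness of the escape facet. The pseudomanifold hypothesis enters crucially at exactly these two spots: uniqueness of the escape facet, and the cyclic structure of codimension-$2$ links.
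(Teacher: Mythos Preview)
The paper does not actually prove this lemma; it is quoted from Barnette~\cite{Barnette1982} and used as a black box in the first proof of Lemma~\ref{prop:banner_outsidepath}. So there is no in-paper argument to compare against. Your strategy --- take a facet path in $\Delta$ and reroute each maximal run of $x$-containing facets through the cycle in the relevant codimension-$2$ link --- is the standard one, and the overall architecture is sound.

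One statement does need correction. Your ``preliminary'' asserts that for every $(d-1)$-face $\tau$ with $x\notin\tau$, \emph{both} facets on $\tau$ avoid $x$. That is false: for any facet $F\ni x$, the face $\tau=F\setminus\{x\}$ is an immediate counterexample. What your justification actually proves --- and what you actually use --- is that \emph{at most one} of the two facets on such a $\tau$ contains $x$, since two such facets would both equal $\tau\cup\{x\}$. This corrected version is exactly what guarantees that the escape facet $\phi(F)$ exists, avoids $x$, and is unique, which in turn makes the length-one-run case and the entry/exit identifications $G_{j-1}=\phi(G_j)$, $G_{k+1}=\phi(G_k)$ go through. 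With the preliminary restated this way, the rest of your argument is correct; the degenerate cases (cycle of length~$3$, run of length one) behave as you describe.
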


A pure simplicial complex $\Delta$ is a {\em homology manifold} if $\Delta$ is connected and
$\link_\Delta(\tau)$ has the homology of a sphere of the appropriate dimension
for every nonempty face $\tau \in \Delta$.
The complex $\Delta$ is a {\em homology sphere} if it is a homology manifold and itself has the homology of a sphere of the same dimension.

A pseudomanifold is called {\em normal} if all links  $\link_\Delta(\tau)$ of dimension at least one are connected.
The condition of being normal is quite
natural and holds e.g.\ for all homology manifolds.
Most importantly, it is not hard to check that the class of 
normal pseudomanifolds is closed under taking links. This is not the case for pseudomanifolds in general.

Of the following four properties for a pure simplicial complex $\Delta$,
each implies its successor:
\begin{itemize} 
\item[(i)] $\Delta$ is a triangulation of a compact topological manifold,
\item[(ii)] $\Delta$ is a homology manifold,
\item[(iii)] $\Delta$ is a normal pseudomanifold,
\item[(iv)]  $\Delta$ is a pseudomanifold.
\end{itemize}

\subsection{Graph theory}

We assume basic knowledge about graphs and refer to \cite{Diestel2005} for details.

A graph $G$ is said to be {\em $k$-connected} if $G$ has more than $k$ vertices and $G \setminus S$ is connected for every set of vertices $S$ with $|S| < k$, where $G \setminus S$ denotes the graph that one obtains by deleting the vertices in $S$ and all incident edges.

The following well-known theorem relates the connectivity of a graph to families of independent paths. Here a family of paths between two vertices $x$ and $y$ is called {\em independent} if the only vertices contained in more than one path of the family are $x$ and $y$.

\begin{theorem}[Menger's Theorem \cite{Diestel2005}]
A graph $G$ with at least $k+1$ vertices is $k$-connected if and only if any two vertices
can be joined by $k$ independent paths.
\end{theorem}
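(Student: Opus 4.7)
The plan is to treat the two directions separately; the forward direction is routine and the reverse direction is the substance of Menger's theorem.

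For the easy direction, suppose any two vertices of $G$ are joined by $k$ independent paths. Let $S\subseteq V(G)$ with $|S|<k$, and let $x,y\notin S$. Since the $k$ independent $x$-$y$ paths have pairwise disjoint interiors, $S$ meets at most $|S|<k$ of them, so at least one path survives entirely in $G\setminus S$. Hence $G\setminus S$ is connected, and together with the hypothesis $|V(G)|\geq k+1$ this yields $k$-connectivity.

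For the hard direction, the strategy is to first establish the \emph{local} (or \emph{fan}) version: for any graph $H$ and disjoint vertex subsets $A,B\subseteq V(H)$, the maximum number of pairwise internally vertex-disjoint $A$-$B$ paths equals the minimum size of a vertex set separating $A$ from $B$. A natural proof proceeds by induction on $|E(H)|$: pick an edge $e=uv$ with neither endpoint in $A\cup B$; either a minimum $A$-$B$ separator of the edge-deleted graph $H-e$ already has the right size (so induction applies), or one contracts $e$ and lifts the family of disjoint paths obtained in $H/e$ back to $H$, with case analysis at the contracted vertex.

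With the local form in hand, the global statement follows. For non-adjacent $x,y$ in a $k$-connected $G$, apply the local form to $H=G\setminus\{x,y\}$ with $A=N_G(x)$ and $B=N_G(y)$: any $A$-$B$ separator in $H$ of size less than $k$ would also disconnect $x$ from $y$ in $G$, contradicting $k$-connectivity, so $k$ internally disjoint paths exist and extend through $x$ and $y$. The adjacent case splits off the direct edge $xy$ as one path and seeks $k-1$ further independent paths in $G-xy$; a small device (subdividing $xy$ by a new vertex, or directly re-running the local argument while avoiding this edge) handles the bookkeeping. The main obstacle is the lifting step in the inductive proof of the local form: carefully tracking how the disjoint paths of $H/e$ can pass through the contracted vertex, and ensuring the reconstruction in $H$ preserves internal disjointness, is where all the combinatorial subtlety sits.
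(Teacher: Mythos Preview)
The paper does not prove this statement at all: Menger's Theorem is quoted as a classical result with a reference to Diestel's textbook, and no argument is supplied. So there is nothing to compare your proposal against on the paper's side.

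As for your sketch itself, it is a reasonable outline of one of the standard textbook proofs, but a couple of points would need tightening before it becomes a proof. First, in your statement of the local form you require $A$ and $B$ to be disjoint, yet in the application you take $A=N_G(x)$ and $B=N_G(y)$, which need not be disjoint (indeed for vertices at distance~$2$ they certainly overlap); the local Menger theorem is usually stated without that disjointness hypothesis, and your deduction relies on that more general version. Second, the inductive step ``pick an edge $e=uv$ with neither endpoint in $A\cup B$'' is not always possible; the base case where every edge meets $A\cup B$ has to be handled separately (it is easy, but it is a genuine case). Finally, your description of the inductive step is a hybrid of two different standard arguments (deletion in $H-e$ versus contraction in $H/e$); either works, but you would need to commit to one and carry out the case analysis you allude to. None of this is fatal, but as written the proposal is a plan rather than a proof.
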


It turns out that it suffices to check this condition for vertices at distance two in the graph.
This fact plays a central role for proving the connectivity results in this paper.

\begin{lemma}[Liu's criterion, \cite{Liu1990}]
\label{liu}
Let $G$ be a connected graph with at least $k+1$ vertices. If for any two
vertices $u$ and $v$ of $G$ with distance $d_G(u, v) = 2$ there are $k$ 
independent $u$ --- $v$ paths in $G$, then $G$ is $k$-connected.
\end{lemma}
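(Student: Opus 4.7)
The plan is to invoke Menger's theorem and reduce the statement to a short contradiction argument built on the minimality of a hypothetical small vertex cut. By Menger's theorem, it suffices to produce $k$ independent paths between every pair of vertices, so I will assume this fails and aim to manufacture a pair at distance exactly $2$ for which the hypothesis cannot hold.

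First I would suppose, for contradiction, that $G$ is not $k$-connected. Since $|V(G)| \ge k+1$, there exists a vertex cut $S \subset V(G)$ with $|S| \le k-1$; choose one of minimum cardinality, and let $C_1, C_2$ be two distinct components of $G \setminus S$.

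The crucial step is to use the minimality of $S$: every vertex $s \in S$ must have a neighbor in every component of $G \setminus S$. Otherwise, if some $s$ had no neighbor in $C_1$, say, then $S \setminus \{s\}$ would still separate $C_1$ from $C_2$, contradicting minimality. Pick any $s \in S$ together with neighbors $u \in C_1$ and $v \in C_2$. Then $u$ and $v$ are non-adjacent (they lie in different components of $G \setminus S$) but both adjacent to $s$, so $d_G(u,v) = 2$.

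Now the hypothesis applies to the pair $(u,v)$: there exist $k$ independent $u$ --- $v$ paths in $G$. Since $S$ separates $u$ from $v$, every one of these paths must meet $S$, and by independence the meeting vertices are distinct. This forces $|S| \ge k$, contradicting $|S| \le k-1$. The main obstacle is really just spotting the minimality observation; once one sees that minimality of $S$ yields, for free, a pair of vertices at distance $2$ whose every connection passes through $S$, the hypothesis is tailor-made to close the argument.
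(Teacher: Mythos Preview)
Your argument is correct and follows essentially the same route as the paper's proof: assume a small separator $S$ exists, use minimality of $S$ to locate some $s\in S$ with neighbours $u,v$ on opposite sides, observe $d_G(u,v)=2$, and then count the distinct $S$-vertices needed by $k$ independent $u$--$v$ paths. The only cosmetic difference is that the paper extracts $u,s,v$ by first finding a path through $S$ using exactly one cut vertex (via the fact that $G\setminus(S\setminus\{s\})$ is connected), whereas you argue directly that each $s\in S$ has a neighbour in every component; both are immediate consequences of the minimality of $S$.
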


\begin{proof}
Assume that $G$ is not $k$-connected. Choose a set $S$ with less than $k$ vertices such that $G \setminus S$ is disconnected and such that every proper subset of $S$ does not disconnect $G$. Then there are two vertices $x,y$ in different components of $G \setminus S$ and a path from $x$ to $y$ in $G$ that contains exactly one element $s$ of $S$. Consider the vertices
$u$ and $v$  immediately before and after $s$ on this path. They are at distance two in $G$ and cannot be connected by a path in $G \setminus S$, because $x$ and $y$ cannot.
Thus, every $u$ --- $v$ path in $G$ must pass through $S$, so there are at most $|S| \leq k-1$ independent $u$ --- $v$ paths in $G$.
\end{proof}

\section{Banner complexes}

Here we present the concept of banner triangulations, a generalization
of flag triangulations.

\begin{definition}
\begin{enumerate} \item A {\em clique} is a subset $T\subseteq V$ such that
$\{u,v\}\in\Delta$ for all $u,v\in T$. It is a $j$-clique if $|T|=j$.
\item A clique $T$  is {\em spanning} if $T\in \Delta$.
\item The complex  $\Delta$ is said to be a {\em flag complex} if every clique is spanning.
\end{enumerate}
\end{definition}

Flag complexes have been shown to have very strong properties in many situations. This is also the case for the connectivity of their edge graphs $\G(\Delta)$ as the following result shows.

\begin{proposition}[Athanasiadis {\cite{Athanasiadis2011}}]
Let $\Delta$ be a  $d$-dimensional simplicial pseudomanifold. If $\Delta$ is flag,
 then the graph $\G(\Delta)$ is $2d$-connected. 
\end{proposition}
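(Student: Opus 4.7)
The plan is to combine Liu's criterion (Lemma~\ref{liu}) with induction on the dimension $d$. By Liu's criterion, it suffices to produce, for any pair of vertices $u,v$ at graph-distance exactly two in $\G(\Delta)$, a family of $2d$ independent $u$-$v$ paths. The base case $d=1$ is immediate, since a one-dimensional pseudomanifold is a single cycle and hence $2$-connected.

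For the inductive step, pick a common neighbor $w$ of $u$ and $v$ and pass to the link $L = \link_\Delta(w)$. Because $\Delta$ is flag, $L$ is also flag, and it is a $(d-1)$-dimensional simplicial complex containing both $u$ and $v$ (after an auxiliary check that $L$ still satisfies the pseudomanifold hypotheses needed to re-apply the statement; this may require using a strengthened form of the hypothesis such as normality). The vertices $u$ and $v$ are non-adjacent in $L$: if they were adjacent, flagness would promote $\{u,v,w\}$ to a face of $\Delta$ and make $\{u,v\}$ an edge, contradicting $d_\G(u,v)=2$. Applying the inductive hypothesis to $L$ yields $2(d-1)$ independent $u$-$v$ paths in $\G(L)$, none of which uses $w$. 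Adjoining the two-step path $u - w - v$ produces $2d-1$ independent paths in $\G(\Delta)$.

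The critical remaining step is to construct one further $u$-$v$ path, independent of all $2d-1$ paths already fixed. I would look for it inside the antistar $\astar_\Delta(w)$, which is strongly connected by Lemma~\ref{lem:astar_sc}. The flag hypothesis is what makes this strategy viable: any neighbor $x$ of $u$ satisfying $\{x,w\} \in \Delta$ is forced by flagness to form a face $\{u,x,w\}$ and thus to lie in $L$, so every neighbor of $u$ that lies outside $L$ is automatically non-adjacent to $w$. Picking such ``exterior'' neighbors $u^*$ of $u$ and $v^*$ of $v$, the plan is to route a path $u - u^* - \cdots - v^* - v$ through $\astar_\Delta(w)$ that avoids all the vertices appearing on the $2(d-1)$ link paths.

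The main obstacle is exactly this final routing. The $2(d-1)$ link paths may occupy a large portion of $L$, so to produce the missing path one has to exploit the strong connectivity of $\astar_\Delta(w)$ together with the flag property in a nontrivial way. I expect this will require an auxiliary connectivity statement — roughly, that the graph of $\astar_\Delta(w)$ remains connected after deletion of a suitable link-controlled set of vertices — and that establishing such a lemma is where the flag hypothesis must enter essentially rather than incidentally.
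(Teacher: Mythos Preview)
Your overall strategy matches the paper's proof of Theorem~\ref{thm:bannergraph} almost exactly: Liu's criterion, induction on $d$, $2(d-1)$ independent paths inside $\link_\Delta(w)$, the short path $u\!-\!w\!-\!v$, and one final ``exterior'' path. You are also right that normality is needed so that links are again normal pseudomanifolds.

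The gap you flag at the end is real, but you have slightly misdiagnosed what the missing lemma should say. You propose to route the last path through $\astar_\Delta(w)$ while \emph{avoiding the vertices used by the $2(d-1)$ link paths}. That is the wrong target: those paths may well cover all of $\link_\Delta(w)$, and there is no reason the antistar minus an arbitrary such set stays connected. The correct (and simpler) move, which the paper carries out, is to avoid \emph{all} of $N(w)$ at once: let $\Gamma$ be the induced subcomplex on $V\setminus N(w)$ and route the exterior path entirely inside $\Gamma$. Then independence from the link paths and from $u\!-\!w\!-\!v$ is automatic. The auxiliary statement you need is precisely Lemma~\ref{prop:banner_outsidepath}: for a banner (hence flag) pseudomanifold, $\Gamma$ is connected. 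Its proof uses Barnette's Lemma~\ref{lem:astar_sc} on the strong connectivity of $\astar_\Delta(w)$ together with the banner/flag condition to show that every ridge on a facet path in $\astar_\Delta(w)$ has a vertex outside $N(w)$.

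There is a second small gap: you use exterior neighbors $u^*\in N(u)\setminus N(w)$ and $v^*\in N(v)\setminus N(w)$ without checking they exist. This is Lemma~\ref{lem:banner_1}: in a banner pseudomanifold, $N(u)\not\subseteq N(w)$ whenever $\{u,w\}\in\Delta$. In the flag case the argument is short (otherwise a facet through $\{u,w\}$ would sit inside $N(w)$ and force the boundary of a $(d{+}1)$-simplex), but it does need to be said.
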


The aim of this paper is to interpolate between the connectivity result of Barnette 
for general pseudomanifolds
and the one of Athanasiadis for flag pseudomanifolds.
 For that purpose, we introduce the concept of banner complexes.

\begin{definition}
\begin{enumerate}
\item A clique $T$  is {\em critical} if  
$T\setminus \{v\}\in \Delta$, for some $v\in T$.
\item A pure $d$-dimensional complex $\Delta$ is said to be a {\em banner complex} if 
\begin{itemize}
\item every critical  $(d+1)$-clique is spanning and 
\item $\Delta$ does not contain the boundary complex of a $(d+1)$-simplex as a subcomplex.
\end{itemize}
\end{enumerate}
\end{definition}

\vspace{5mm}

\begin{figure}[h]
\begin{center}
\psfrag{A}{$A$}
\psfrag{B}{$B$}
\includegraphics{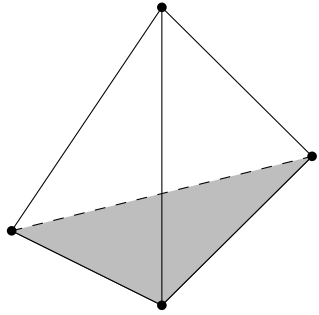}
\caption{A critical non-spanning $4$-clique}
\label{fig:banner1}
\end{center}
\end{figure}

\vspace{5mm}

Every clique in a flag complex is spanning.
Let us call
a pure $d$-dimensional complex $\Delta$, not containing the boundary of 
a $(d+1)$-simplex as a subcomplex, strongly banner
if every $(d+1)$-clique is spanning.
Then
$$\mbox{flag \ $\Rightarrow$ \ strongly banner \ $\Rightarrow$ \ banner}$$

All triangle-free graphs with no isolated vertices are strongly banner.
For two-dimensional complexes the concepts of banner and flag coincide. 
This is so because every $3$-clique is critical and a non-spanning $3$-clique
is the same thing as an empty triangle. Furthermore, if every $3$-clique is spanning but there is a non-spanning $4$-clique then the complex contains the boundary of a $3$-simplex as subcomplex.
The three concepts become distinct starting in dimension three,
as shown by the following examples.

\begin{example}
\label{rem0} 

For any graph $G=(V, E)$ we construct a pure  $3$-dimensional simplicial
complex $\Gamma(G)$ as follows. Just expand each edge $e$ of $G$ to a tetrahedron
$\sigma_e =\{v_1, v_2, e_1, e_2\}$, where $v_1$ and $v_2$ are the two endpoints
of $e$, and $e_1$ and $e_2$ are new vertices specific to $e$.
Then $\Gamma(G)$ is defined as the complex with facets 
$\sigma_e$, $e\in E$. Thus, $\Gamma(G)$ consists of tetrahedra, one for each edge $e\in E$,
that pairwise meet in a vertex exactly when the corresponding edges do.

Let $G=K_n$, the complete graph on $n$ vertices. One sees that
$\Gamma(K_3)$ is strongly banner but not flag, while 
$\Gamma(K_4)$ is  banner but not strongly banner.
\end{example}


\begin{example}
\label{rem1} 
We now present a detailed 
construction of a shellable  $3$-ball $\Gamma$ on $16$ vertices that is strongly banner but not flag.
The vertices are ($ i=1,2,3$):

\vspace{3mm}

$x_i, a_i, b_i, c_i, d_i$ and $y$, 
\vspace{3mm}

\noindent
and the 26 facets are ($i=1,2,3$):
\vspace{3mm}

$
\begin{array}{lllllll}
x_i, x_{i+1}, a_i, b_i &  && 
x_i, a_i, b_i, b_{i-1} & &&
y, a_1, a_2, a_3 \\
x_i, x_{i+1}, b_i, c_i & &&
x_i, a_i, a_{i-1}, b_{i-1} & &&
y, b_1, b_2, b_3 \\
x_i, x_{i+1}, c_i, d_i & &&
y, a_i, b_i, a_{i+1} & && \\
x_i, x_{i+1}, a_i, d_i & &&
y, b_i, a_{i+1}, b_{i+1}& &&
\end{array}
$

\vspace{3mm}

\noindent
Figure 2 shows part of the structure of $\Gamma$, namely 
the subcomplex generated by the $12$ facets

$
\begin{array}{llllllllll}
x_i, x_{i+1}, a_i, b_i &  && 
x_i, x_{i+1}, b_i, c_i &&&
x_i, x_{i+1}, c_i, d_i &  && 
x_i, x_{i+1}, a_i, d_i
\end{array}
$

\vspace{3mm}

\noindent
This subcomplex, a ``ring of three bananas,''
consists of  $3$ octahedra on vertices 
$\{x_i, x_{i+1}, a_i, b_i, c_i, d_i\}$, 
$i\in \Z /3\Z$, glued together
at the common vertices $x_1, x_2, x_3$.
The rest of $\Gamma$ is used to ``fill the hole'' in this octahedral ring.
 
\vspace{5mm}

\begin{figure}[h]
\begin{center}
\psfrag{a1}{$a_1$}
\psfrag{a2}{$a_2$}
\psfrag{a3}{$a_3$}
\psfrag{b1}{$b_1$}
\psfrag{b2}{$b_2$}
\psfrag{b3}{$b_3$}
\psfrag{c1}{$c_1$}
\psfrag{c2}{$c_2$}
\psfrag{c3}{$c_3$}
\psfrag{d1}{$d_1$}
\psfrag{d2}{$d_2$}
\psfrag{d3}{$d_3$}
\psfrag{x1}{$x_1$}
\psfrag{x2}{$x_2$}
\psfrag{x3}{$x_3$}
\includegraphics{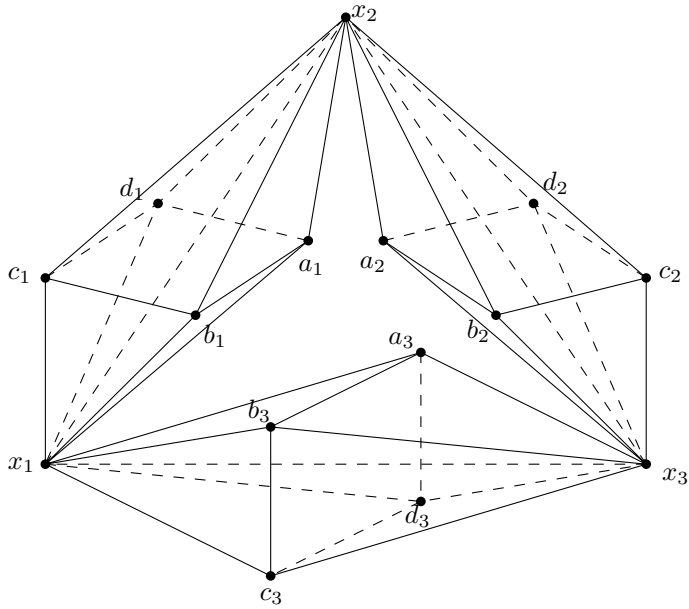}
\caption{The three bananas part of the ball $\Gamma$}
\label{fig:banner2}
\end{center}
\end{figure}

\vspace{5mm}

The complex $\Gamma$ is shellable. Shellings are obtained by starting
with the $8$ facets in $\star_{\Gamma}(y)$, at the center of $\Gamma$,
and then moving out to the facets of the three octahedra.
Thus, being a shellable pseudomanifold with boundary, $\Gamma$ is a 
$3$-dimensional ball. Its $f$-vector is
$(1, 16, 54, 65, 26)$.
Its boundary is a $2$-sphere with $f$-vector $(1, 15, 39, 26)$.

The complex $\Gamma$ has a unique empty triangle, namely $\{x_1, x_2, x_3\}$.
Hence, it is not flag. However, it is strongly banner. 
One can reason as follows to see that every $4$-clique is spanning.

Suppose that $F$ is a $4$-clique in $\G(\Gamma)$. Dividing the
vertices into $3$ groups

$$X=\{x_i\} \qquad  A=\{a_i, b_i, c_i, d_i\} \qquad Y=\{y\}
$$
we observe that none of the three sets contains a $4$-clique
and that  either $F\cap X=\emptyset$ or
$F\cap Y=\emptyset$, since there are no edges $\{x_i, y\}$.
So there are two cases to consider: 
$F\subseteq X\cup A$  and $F\subseteq  A\cup Y$.
We leave to the reader the few easy steps left to verify
that in both cases $F$ must be one of the $26$ facets.

From this ball a triangulated $3$-sphere that is strongly banner but not flag
can be derived, see Example \ref{rem2} below.
\end{example}

\vspace{2mm}

\begin{example} \label{rem9}
A triangulation of a $3$-ball that is banner but not
strongly banner can be constructed along the same lines as our 3-ball in
Example 3.5, starting this time from a $4$-clique, embedding its $6$ edges into
octahedra to form a ``tetrahedron of six bananas'',
and then filling in the rest so that it is banner.
\end{example}

\vspace{4mm}

The properties of being banner or strongly banner are inherited by some related
complexes, such as links, cones and suspensions. Here are a few useful
such constructions.

\begin{proposition}
\label{lem:banner_link}
Let $\Delta$ be a pure $d$-dimensional simplicial complex. If $\Delta$ is banner and $x$ is a vertex of $\Delta$, then $\link_\Delta(x)$ is banner. The same is true for being strongly banner.
\end{proposition}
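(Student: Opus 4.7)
The plan is to verify both conditions in the definition of banner for $L = \link_\Delta(x)$, which is pure of dimension $d-1$. For the spanning condition, I would start with a critical $d$-clique $T$ of $L$ and argue that $T \cup \{x\}$ is a critical $(d+1)$-clique of $\Delta$. Indeed, every vertex of $T$ lies in $L$ and is therefore adjacent to $x$ in $\Delta$, so together with the edges of $T$ inside $L \subseteq \Delta$ we obtain a $(d+1)$-clique; and if $v \in T$ witnesses criticality of $T$ via $T \setminus \{v\} \in L$, then $(T \cup \{x\}) \setminus \{v\} = (T \setminus \{v\}) \cup \{x\} \in \Delta$ by definition of link. The banner hypothesis for $\Delta$ then forces $T \cup \{x\} \in \Delta$, which by definition of link (noting $x \notin T$) gives $T \in L$.

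For the forbidden-subcomplex condition I would argue by contradiction. Suppose $L$ contained the boundary of a $d$-simplex on a set $V = \{v_1, \ldots, v_{d+1}\}$, so every $V \setminus \{v_i\}$ lies in $L$ while $V \notin L$. Then $V$ is a critical $(d+1)$-clique of $\Delta$, hence $V \in \Delta$ by the banner hypothesis. Now set $W = V \cup \{x\}$; each of its $(d+1)$-subsets lies in $\Delta$, namely $V$ itself and each $(V \setminus \{v_i\}) \cup \{x\}$, the latter because $V \setminus \{v_i\} \in L$. Since $|W| = d+2$ and $\Delta$ is pure of dimension $d$, $W$ itself cannot be a face of $\Delta$, so $\Delta$ contains the boundary complex of a $(d+1)$-simplex as a subcomplex, contradicting the banner hypothesis.

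For the strongly banner variant only the first step needs adjustment: an arbitrary $d$-clique $T$ of $L$ (no criticality required) still yields a $(d+1)$-clique $T \cup \{x\}$ of $\Delta$, and the strongly banner hypothesis produces $T \cup \{x\} \in \Delta$ directly. The second step is unchanged, using the forbidden-subcomplex clause that is common to both definitions. I do not foresee a serious obstacle; the only care needed is the dimension shift, in that coning with $x$ converts the $d$-cliques and $d$-simplex boundaries relevant to $L$ into the $(d+1)$-cliques and $(d+1)$-simplex boundaries controlled by the hypothesis on $\Delta$.
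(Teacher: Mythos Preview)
Your proposal is correct and follows essentially the same argument as the paper: cone a critical $d$-clique of the link with $x$ to obtain a critical $(d+1)$-clique of $\Delta$, and cone a hypothetical boundary of a $d$-simplex in the link with $x$ (after using banner to fill in the top face $V$) to produce a forbidden boundary of a $(d+1)$-simplex in $\Delta$. Your write-up even supplies a couple of details the paper leaves implicit, such as why $W=V\cup\{x\}$ itself cannot be a face.
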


\begin{proof}
Because $\link_\Delta(x)$ is $(d-1)$-dimensional, we need to show that every critical $d$-clique is spanning and that $\link_\Delta(x)$ does not contain the boundary of the $d$-simplex.

Let $T$ be a critical $d$-clique in $\link_\Delta(x)$. Then $T \setminus v$ is a face of $\link_\Delta(x)$ for some vertex $v \in T$. If we set $T' = T \cup \{x\}$, then $T'$ is a clique and $T' \setminus v$ is a face of $\Delta$. So, $T'$ is a critical $(d+1)$-clique and thus spanning because $\Delta$ is banner. Equivalently, $T$ is a face of $\link_\Delta(x)$ and thus spanning.

Assume that $\link_\Delta(x)$ contains the boundary of the $d$-simplex as a subcomplex. Then the set $T$ of vertices of that simplex in $\link_\Delta(x)$ is a critical $(d+1)$-clique in $\Delta$. Because $\Delta$ is banner, $T$ is spanning and the subcomplex of $\Delta$ on vertices $T \cup \{x\}$ is the boundary of a $(d+1)$-simplex.
\end{proof}

\vspace{4mm}

If a triangulated $d$-ball $\Delta$ is
extended by raising a cone with apex $x$ over its boundary complex  $\partial \Delta$,
we obtain a triangulated $d$-sphere  $\widetilde{\Delta} = \Delta \cup  
(\partial \Delta \ast x)$, see Figure \ref{fig:banner3}.

\setlength{\unitlength}{1cm}
\begin{figure}[ht]
\begin{center}
\psfrag{x}{$x$}
\psfrag{y}{$y$}
\psfrag{z}{$z$}
\psfrag{u}{$u$}
\psfrag{w}{$w$}
\includegraphics[width=0.4\textwidth]{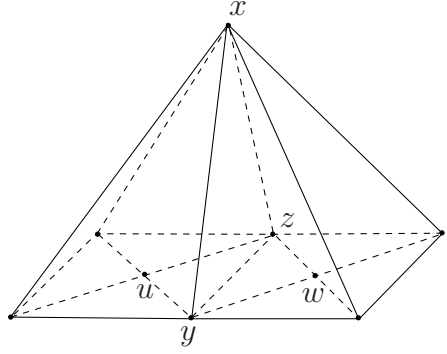}
\caption{The sphere $\widetilde{\Delta}$}
\label{fig:banner3}
\end{center}
\vspace{5mm}
\end{figure}

The $j$-cliques in $\G(\widetilde{\Delta})$ are of three kinds: 
\smallskip

(1) the $j$-cliques of  $\G(\Delta)$, 

(2) the $(j-1)$-cliques of $\G(\partial \Delta)$ augmented by $x$, and 

(3) the $(j-1)$-cliques of $\G(\Delta)$ with no interior vertex
and not of type (2),
\hspace*{10mm} augmented by $x$. 

For example, in Figure \ref{fig:banner3} the triangle $xyz$  is a $3$-clique of type $(3)$.

\begin{proposition}
\label{thm:conesusp} For a pure simplicial complex $\Delta$, 
let property X denote either ``banner'', ``strongly banner'', or ``flag''. Then,
\begin{enumerate}
 \item[(i)] 
${\Delta} \mbox{ has property X } \Leftrightarrow \; \mathrm{cone}(\Delta)  \mbox{ has property X}$ \\
 \item[(ii)]
${\Delta} \mbox{ has property X } \Leftrightarrow \; \mathrm{susp}(\Delta)   \mbox{ has property X}$ \\
 \item[(iii)] If $\Delta$ is a $d$-ball lacking $(j-1)$-cliques of type (3), then \\
$\widetilde{\Delta} \mbox{ has property X } \Leftrightarrow \; \Delta \mbox{ and } 
\partial \Delta\mbox{ have property X} 
$
\end{enumerate}
\end{proposition}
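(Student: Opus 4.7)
The plan is to translate the conditions defining flag, banner, and strongly banner from the output of each construction to its input(s) by means of a clique/face dictionary. For part (i), let $c$ be the cone apex. Since $c$ is adjacent to every other vertex in $\G(\mathrm{cone}(\Delta))$, a subset $T$ is a clique of $\mathrm{cone}(\Delta)$ iff $T\setminus\{c\}$ is a clique of $\Delta$, and similarly $T$ is a face iff $T\setminus\{c\}\in\Delta$. The flag case follows at once. For the banner case, analyze a critical $(d+2)$-clique $T$ of $\mathrm{cone}(\Delta)$ in two subcases: if $c\in T$, then $T\setminus\{c\}$ is a critical $(d+1)$-clique of $\Delta$ and hence spanning by banner of $\Delta$, which lifts to $T$; if $c\notin T$, then criticality forces some $T\setminus v$ to be a facet of $\Delta$, after which iterating the argument shows every $(d+1)$-subset of $T$ is a facet, producing a forbidden boundary of $(d+1)$-simplex in $\Delta$. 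A hypothetical boundary of $(d+2)$-simplex in $\mathrm{cone}(\Delta)$ is handled by the same case split and leads directly to a forbidden boundary of $(d+1)$-simplex in $\Delta$. The strongly banner variant drops the critical hypothesis but runs identically. The reverse implications use that $\Delta$ is an induced subcomplex of $\mathrm{cone}(\Delta)$: any critical $(d+1)$-clique $T$ of $\Delta$ lifts to a critical $(d+2)$-clique $T\cup\{c\}$ of $\mathrm{cone}(\Delta)$, forcing $T\cup\{c\}$ and hence $T$ to be spanning, and a hypothetical boundary of $(d+1)$-simplex in $\Delta$ is already a $(d+2)$-clique of $\mathrm{cone}(\Delta)$ all of whose $(d+1)$-subsets are facets.

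For part (ii), the suspension has two non-adjacent apex vertices $c_1, c_2$, each adjacent to every vertex of $\Delta$; every clique of $\mathrm{susp}(\Delta)$ therefore contains at most one of them. The same dictionary as in (i) applies after recording which apex (if any) is present, and the case analysis of (i) is run once for each apex.

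For part (iii), I use the three-type classification of $j$-cliques of $\widetilde\Delta$ given in the text. The hypothesis that $\Delta$ lacks $(j-1)$-cliques of type (3) (for every $j$ in the flag case, and for $j=d+1$ in the banner and strongly banner cases) means that every $(d+1)$-clique of $\widetilde\Delta$ is of type (1) or (2). Type (1) cliques live in $\Delta$, and type (2) cliques are of the form $F\cup\{x\}$ with $F$ a $d$-clique of $\partial\Delta$, lying in the cone $\partial\Delta\ast x$ whose property X is controlled by that of $\partial\Delta$ via part (i). Criticality and spanning therefore transfer between $\widetilde\Delta$ and the pair $(\Delta,\partial\Delta)$: a critical type (1) clique corresponds to a critical $(d+1)$-clique of $\Delta$, and a critical type (2) clique to a critical $d$-clique of $\partial\Delta$. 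A forbidden boundary of $(d+2)$-simplex in $\widetilde\Delta$ splits likewise: if it avoids $x$ it is a forbidden boundary in $\Delta$; if it contains $x$ then removing $x$ yields a boundary of $(d+1)$-simplex in $\partial\Delta$. The main obstacle throughout is the no-boundary-of-simplex condition: unlike criticality it is nonlocal, so one must run the case analysis separately for each construction rather than quoting a clean one-clique-at-a-time translation. In part (iii) this is exactly where the type (3) hypothesis is essential; without it, a type (3) $(d+1)$-clique would be critical in $\widetilde\Delta$ while corresponding to neither a critical clique of $\Delta$ nor a clique of $\partial\Delta$, breaking the dictionary.
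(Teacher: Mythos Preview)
The paper gives no detailed proof of this proposition, stating only that ``the proofs are in all cases straightforward verifications.'' Your outline supplies exactly the kind of clique/face dictionary argument the authors have in mind, and the case analyses you sketch for parts (i)--(iii) are the natural ones; in particular, your identification of where the no-type-(3) hypothesis enters in part (iii) is correct. A couple of minor points could be tightened---for instance, in the reverse direction of (i) a boundary of a $(d+1)$-simplex in $\Delta$ yields a \emph{critical} $(d+2)$-clique in $\mathrm{cone}(\Delta)$ (forcing it to span, contradicting $\dim\Delta=d$) rather than directly a boundary of a $(d+2)$-simplex, and in part (iii) the subcase of a type~(2) critical clique $F\cup\{x\}$ whose only criticality witness is $v=x$ deserves a word---but these are routine completions, not missing ideas.
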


The proofs are in all cases straightforward verifications.


\begin{example}
\label{rem2} 
Parts (i) and (ii) of the proposition can be used to give examples in all dimensions of complexes that
are exactly one of  banner, strongly banner and flag.
Part (iii) applied to the $3$-ball $\Gamma$ of Example \ref{rem1} 
shows how to
construct a shellable $3$-sphere 
that is strongly banner but not flag.
Applying the construction instead to the $3$-ball 
of Example \ref{rem9} we obtain a banner shellable $3$-sphere that is not strongly banner.
\end{example}

\vspace{2mm}

The three classes of complexes that we have considered  have the property that
the graph $\G(\Delta)$ determines the whole complex, or almost. Any complex
is of course determined by its facets, and in a strongly banner
complex $\Delta$ the facets are determined by the $(d+1)$-cliques of $\G(\Delta)$.

The situation for banner complexes is slightly weaker. Suppose that
$\Delta$ is banner, strongly connected, and every codimension one face is contained in exactly
$k$ facets (e.g., if $\Delta$ is a banner pseudomanifold, the $k=2$ case). Then it can be shown that
if we know one particular $(d+1)$-clique to be a facet, then
we can identify the rest of the facets among the other $(d+1)$-cliques.



\section{Connectivity of banner pseudomanifolds}

The aim of this section is to prove the following:

\begin{theorem}
\label{thm:bannergraph}
Let $\Delta$ be a normal $d$-dimensional pseudomanifold.
If $\Delta$ is banner, then its  graph $\G(\Delta)$ is $2d$-connected.
\end{theorem}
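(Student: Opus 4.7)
The plan is to induct on $d$ via Liu's criterion (Lemma~\ref{liu}). The base case $d = 1$ is immediate, since a normal $1$-pseudomanifold is a cycle of length $\ge 3$, which is already $2$-connected. For $d \ge 2$, by Liu's criterion it suffices to show that every pair $u, v$ of vertices of $\G(\Delta)$ with $d_{\G(\Delta)}(u, v) = 2$ is joined by $2d$ pairwise independent paths in $\G(\Delta)$; that $\G(\Delta)$ has more than $2d$ vertices follows from applying the induction hypothesis to the link of any vertex.

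Fix a common neighbor $w$ of $u$ and $v$. Since normality is preserved under taking links, and since $\link_\Delta(w)$ is banner by Proposition~\ref{lem:banner_link}, $\link_\Delta(w)$ is a $(d-1)$-dimensional banner normal pseudomanifold. The inductive hypothesis makes $\G(\link_\Delta(w))$ into a $(2d-2)$-connected graph. Now $u$ and $v$ are vertices of $\link_\Delta(w)$ that are non-adjacent in $\G(\link_\Delta(w))$, since $\{u, v\} \notin \Delta$ by the distance-two assumption. Menger's theorem therefore provides $2d - 2$ pairwise independent $u$--$v$ paths inside $\link_\Delta(w)$; together with the length-two path $u$--$w$--$v$, these furnish $2d - 1$ independent $u$--$v$ paths in $\G(\Delta)$.

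To extract the $(2d)$-th independent path I would argue by contradiction. Assume a vertex cut $S \subseteq V \setminus \{u, v\}$ of $\G(\Delta)$ separates $u$ from $v$ with $|S| \le 2d - 1$. Blocking $u$--$w$--$v$ forces $w \in S$, and blocking every $u$--$v$ path in the subgraph $\G(\link_\Delta(w))$ forces $|S \cap V(\link_\Delta(w))| \ge 2d - 2$. Hence $|S| = 2d - 1$, the set $S$ has the form $\{w, w_1, \dots, w_{2d-2}\}$ with each $w_i \in V(\link_\Delta(w))$, and $\{w_1, \dots, w_{2d-2}\}$ is itself a minimum cut of $\G(\link_\Delta(w))$ separating $u$ from $v$.

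The main obstacle is to rule out this rigid configuration using the banner hypothesis on $\Delta$ (and not just on $\link_\Delta(w)$) together with the strong connectivity of $\astar_\Delta(w)$ (Lemma~\ref{lem:astar_sc}). The intended strategy is to chain $d$-facets of $\astar_\Delta(w)$ from a facet through $u$ to a facet through $v$; since any such chain must cross the cut, one locates a $(d-1)$-face whose two ambient $d$-facets lie on opposite sides of $\{w_1, \dots, w_{2d-2}\}$. Pairing this $(d-1)$-face with the facet of $\star_\Delta(w)$ sharing it in $\link_\Delta(w)$ produces a $(d+1)$-subset of $V$ which is a critical $(d+1)$-clique but fails to span, or alternatively forces a copy of $\partial\Delta^{d+1}$ into $\Delta$, contradicting the banner property. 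The subtlety is that the induced subgraph $\G(\Delta)|_{V(\link_\Delta(w))}$ may strictly contain $\G(\link_\Delta(w))$, because an empty triangle $\{a, b, w\}$ yields an edge in the former but not the latter; it is precisely these extra edges, controlled by the banner hypothesis on the ambient $\Delta$, that supply the missing $(2d)$-th path.
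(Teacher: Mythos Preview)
Your first two paragraphs are correct and match the paper exactly: induction on $d$, Liu's criterion, and $2d-2$ independent paths inside $\link_\Delta(w)$ plus the path $u\,\text{--}\,w\,\text{--}\,v$. The gap is entirely in your third paragraph, which is a plan rather than a proof, and the plan as written does not quite cohere.

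Your phrase ``two ambient $d$-facets lie on opposite sides of $\{w_1,\dots,w_{2d-2}\}$'' has no meaning: a set of $2d-2$ vertices does not separate a $d$-complex into sides. The remark about $\G(\Delta)|_{V(\link_\Delta(w))}$ possibly strictly containing $\G(\link_\Delta(w))$ is a red herring; those extra edges play no role. What you actually need from the facet chain $\sigma_0,\dots,\sigma_k$ in $\astar_\Delta(w)$ is this: at each step try to pick $u_i\in\sigma_{i-1}\cap\sigma_i$ with $u_i\notin S$. If this ever fails, then the ridge $\mu:=\sigma_{i-1}\cap\sigma_i$ lies entirely in $S\setminus\{w\}\subseteq N(w)$, so $\mu\cup\{w\}$ is a $(d+1)$-clique; it is critical because $\mu\in\Delta$, and it is not spanning because $\mu$ already sits in the two facets $\sigma_{i-1},\sigma_i$ of $\astar_\Delta(w)$, so a third facet $\mu\cup\{w\}$ would violate the pseudomanifold condition. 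That contradicts banner. Hence the $u_i$ can always be chosen, and $u\,\text{--}\,u_1\,\text{--}\,\cdots\,\text{--}\,u_k\,\text{--}\,v$ is the missing path.

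This is precisely the content of the paper's Lemma~\ref{prop:banner_outsidepath}, and once you see it there is no reason to package it as a contradiction on a hypothetical cut $S$. The paper proves directly that the induced subcomplex $\Gamma$ on $V\setminus N(w)$ is connected (Lemma~\ref{prop:banner_outsidepath}), and that each of $u,v$ has a neighbour in $\Gamma$ (Lemma~\ref{lem:banner_1}); a path through $\Gamma$ then furnishes the $(2d)$-th independent path outright, since all of its interior vertices avoid $N(w)$ while the other $2d-1$ paths live inside $\star_\Delta(w)$. Your detour through an assumed minimum cut only re-derives, in weaker form, that this path misses $S\subseteq N(w)$. Finally, your vertex count is off by one: the link gives at least $2d-1$ vertices, the centre $w$ one more, but the $(2d{+}1)$-st vertex needed for Liu's criterion must come from outside $N(w)$, i.e.\ again from Lemma~\ref{lem:banner_1}.
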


To prove Theorem \ref{thm:bannergraph}, we will use a few lemmas.
For $x\in V$ let $N(x)$ be the set of vertices of $\star_{\Delta}(x)$,
that is, $x$ together with its neighboring vertices $z\in V$ such that $\{x,z\} \in \Delta$.

\begin{lemma}
\label{lem:banner_1}
Let $\Delta$ be a  banner pseudomanifold. If $\{x,y\}\in\Delta$
then $N(y) \not\subseteq N(x)$.
\end{lemma}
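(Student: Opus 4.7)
The plan is to suppose for contradiction that $N(y)\subseteq N(x)$ for some edge $\{x,y\}\in\Delta$ and then exhibit a copy of the boundary of a $(d+1)$-simplex inside $\Delta$, violating the second half of the banner definition.

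First I would produce a facet of $\Delta$ that contains $y$ but not $x$. Pick any facet $\tau_0\supseteq\{x,y\}$ and write $\tau_0=\{x,y,u_1,\dots,u_{d-1}\}$. The set $\tau_0\setminus\{x\}$ is a codimension-one face of $\Delta$ containing $y$, and by the pseudomanifold axiom it lies in exactly two facets; one is $\tau_0$, and the other must omit $x$, giving a facet $\tau=\{y,u_1,\dots,u_{d-1},w\}$ with $w\neq x$. After relabeling, write $\tau=\{y,u_1,\dots,u_d\}$.

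Next I would exploit $N(y)\subseteq N(x)$ to enlarge $\tau$ to a $(d+2)$-clique. Each $u_i$ is adjacent to $y$, hence lies in $N(y)\subseteq N(x)$, and since $u_i\neq x$ this forces $\{x,u_i\}\in\Delta$. Together with $\{x,y\}\in\Delta$, the set $F:=\tau\cup\{x\}=\{x,y,u_1,\dots,u_d\}$ is a $(d+2)$-clique of $\G(\Delta)$. I would then verify that every $(d+1)$-subset of $F$ is a face of $\Delta$: the subset $F\setminus\{x\}=\tau$ is a face by construction, and every other $(d+1)$-subset has the form $F\setminus\{v\}$ for some $v\in\tau$, where removing $x$ yields $\tau\setminus\{v\}\subseteq\tau$, a face of $\Delta$. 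Thus each $F\setminus\{v\}$ is a critical $(d+1)$-clique, and by the banner property it is spanning. So all $(d+1)$-subsets of $F$ belong to $\Delta$, meaning $\Delta$ contains the boundary of a $(d+1)$-simplex on vertex set $F$ as a subcomplex, contradicting the definition of banner.

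The main obstacle is the first step. The hypothesis $N(y)\subseteq N(x)$ says that $x$ dominates $y$ in a strong sense, and without the pseudomanifold axiom there is no reason $y$ should lie in any facet missing $x$; the fact that each codimension-one face lies in exactly two facets is exactly what is needed to manufacture $\tau$. Once $\tau$ is in hand, the rest is a mechanical check that turns the critical-clique half of the banner definition against itself to force the forbidden simplex boundary.
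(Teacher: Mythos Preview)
Your proof is correct and follows essentially the same route as the paper's: both start from a facet containing $\{x,y\}$, use the pseudomanifold property on the ridge obtained by deleting $x$ to produce a second facet $\tau\ni y$ with $x\notin\tau$, observe that $\tau\cup\{x\}$ is a $(d+2)$-clique (the paper phrases this as ``$w$ adjacent to $x$'' rather than via the hypothesis $N(y)\subseteq N(x)$, but the content is identical), and then use the critical-clique axiom to force all $(d+1)$-subsets to be faces, yielding the forbidden simplex boundary. The only cosmetic difference is that the paper argues contrapositively by exhibiting a specific $w\in N(y)\setminus N(x)$, while you argue by contradiction.
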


\begin{proof}
Let $F$ be any maximal face in $\link_\Delta(x)$ that contains $y$. Then,
since $\Delta$ is a pseudomanifold,  $\link_\Delta(F)$ consists of two isolated vertices, one of which is $x$. Let $w$ be the other such vertex, so that $F \cup \{w\} \in \Delta$ and $w \not\in F$. 

Assume that $w$ is adjacent to $x$, and as usual let $ d= \dim(\Delta)$.
For any vertex $v \in F$, the set $F \setminus v \cup \{x,w\}$ is a critical $(d+1)$-clique in $\Delta$, sincce 
$\left( F \setminus v \cup \{x,w\} \right) \setminus x = (F \setminus v) \cup w$
is a face of $\Delta$.
Because $\Delta$ is assumed to be banner, $F \setminus v \cup \{x,w\}$ is a face of $\Delta$.
Since $v \in F$ is arbitrary, this implies that $\Delta$ contains the boundary of the $(d+1)$-simplex on vertex set $F \cup \{x,w\}$, in contradiction to our assumption that it is banner. Therefore, $w$ is not adjacent to $x$ and 
$N(y) \not\subseteq N(x)$.
\end{proof}

\begin{lemma}
\label{lem:banner_Gnotcomplete}
Let $\Delta$ be a 
pseudomanifold. If $\Delta$ is banner, then $\G(\Delta)$ is not a complete graph.
\end{lemma}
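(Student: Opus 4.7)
The plan is to show the contrapositive via the two defining clauses of the banner property. Assume for contradiction that $\G(\Delta)$ is the complete graph on its vertex set $V$. The goal is to exhibit a copy of $\partial\sigma^{d+1}$ (the boundary of a $(d+1)$-simplex) sitting inside $\Delta$, contradicting the second clause of the banner definition.

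First I would observe that a $d$-dimensional pseudomanifold must have at least $d+2$ vertices: any facet uses $d+1$ vertices, and if that were all of $V$ the unique facet's $(d-1)$-faces would be contained in only one facet, violating condition (i) in the pseudomanifold definition. So we may pick a facet $F=\{v_0,\dots,v_d\}$ of $\Delta$ and a vertex $w\in V\setminus F$.

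The key step is to use the completeness assumption together with the critical-clique clause of banner. For each $i\in\{0,\dots,d\}$, consider the $(d+1)$-subset
\[
T_i \;=\; (F\cup\{w\})\setminus\{v_i\}.
\]
Since $\G(\Delta)$ is complete, $T_i$ is a $(d+1)$-clique. Moreover $T_i\setminus\{w\}=F\setminus\{v_i\}$ is a face of $\Delta$ (as a subset of the facet $F$), so $T_i$ is critical. Because $\Delta$ is banner, every critical $(d+1)$-clique is spanning, so $T_i\in\Delta$ for each $i$. Together with the facet $F=(F\cup\{w\})\setminus\{w\}$, this produces all $d+2$ top-dimensional faces of the simplex on vertex set $F\cup\{w\}$, i.e.\ $\Delta$ contains the boundary of the $(d+1)$-simplex on $F\cup\{w\}$ as a subcomplex. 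This contradicts the second clause of the banner definition, completing the proof.

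I do not expect a real obstacle here; the statement is essentially an immediate unwinding of the definitions once one has chosen the facet $F$ and an outside vertex $w$. The only mild subtlety is making sure that $w$ exists, which is handled by the $|V|\ge d+2$ observation above.
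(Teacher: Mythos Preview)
Your argument is correct. The paper's written proof is a one-liner invoking the preceding Lemma~\ref{lem:banner_1}: if $\G(\Delta)$ were complete then $N(y)=V=N(x)$ for any edge $\{x,y\}$, contradicting $N(y)\not\subseteq N(x)$. The paper also remarks that the lemma ``is easy to see directly from the definition,'' and your proof is exactly that direct verification. The two routes are close in spirit, since the proof of Lemma~\ref{lem:banner_1} itself builds a forbidden $\partial\sigma^{d+1}$ via critical cliques; your version simply bypasses the neighbor-set formulation and goes straight to the simplex boundary, which is a bit cleaner for this particular statement.
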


\begin{proof}
Follows from the preceding Lemma and is also easy to see directly from the definition.
\end{proof}

We remark that banner complexes that are not pseudomanifolds can have a complete 
graph. However, this happens only for complexes that are the  
$d$-skeleton of  a $k$-simplex with $k \ge d+2$.
\medskip

Let $\Delta$ be a banner pseudomanifold, $x \in V(\Delta)$ an arbitrary vertex of $\Delta$ and
let $\Gamma$ denote the subcomplex of $\Delta$ 
induced on the set $V\setminus N(x)$. 
We need for our proof that  $\Gamma$ is nonempty and connected.
It is clear from Lemma \ref{lem:banner_1}  that $N(x)\neq V$.
That $\Gamma$ is connected  seems very natural
but is not to be taken 
automatically for granted. For instance, for the complex shown in 
Figure \ref{fig:banner3} (which is not banner)
the subcomplex  $\Gamma$ consists of two isolated vertices $u$ and $w$.

We offer two proofs. The first one is entirely elementary, relying on
Barnette's Lemma \ref{lem:astar_sc}. The second uses Lefschetz
duality, and is therefore valid only for homology manifolds. 

\begin{lemma}
\label{prop:banner_outsidepath}
Let $x$ be a vertex of  a banner pseudomanifold  $\Delta$. 
Then  the  subcomplex $\Gamma$ 
induced on the set of vertices not adjacent to $x$  is connected.
\end{lemma}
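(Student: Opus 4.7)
The plan is to use Barnette's Lemma \ref{lem:astar_sc} to produce a sequence of $d$-facets of $\Delta$ avoiding $x$ that strings any two given vertices $u,v \in V\setminus N(x)$ together, and then to use the banner hypothesis at each codimension-one transition to keep the walk inside $\Gamma$.

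First I would note that since $u\notin N(x)$, no $d$-facet of $\Delta$ containing $u$ can contain $x$ (that would give an edge $\{x,u\}\in\Delta$), so $u$ lies in at least one $d$-facet of $\Delta$ avoiding $x$; likewise for $v$. Barnette's Lemma then supplies a sequence $F_0,F_1,\ldots,F_m$ of $d$-facets of $\Delta$ none of which contains $x$, with $u\in F_0$, $v\in F_m$, and $G_i := F_i\cap F_{i+1}$ a $(d-1)$-face of $\Delta$ for each $i$.

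The key step --- the only place the banner hypothesis is used, and the one I expect to be the main obstacle --- is to show that each $G_i$ contains a vertex of $\Gamma$, i.e.\ a vertex outside $N(x)$. Suppose instead $G_i\subseteq N(x)\setminus\{x\}$. Then every vertex of $G_i$ is adjacent to $x$, so $G_i\cup\{x\}$ is a $(d+1)$-clique in $\G(\Delta)$. It is critical, because $(G_i\cup\{x\})\setminus\{x\} = G_i \in\Delta$. The banner hypothesis then forces $G_i\cup\{x\}$ to be spanning, hence to be a $d$-facet of $\Delta$. But this would place the $(d-1)$-face $G_i$ in three distinct facets $F_i$, $F_{i+1}$, and $G_i\cup\{x\}$, contradicting the pseudomanifold axiom that every $(d-1)$-face lies in exactly two facets. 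The whole argument thus comes down to recognizing that the single codimension-one intersection $G_i$ is exactly the configuration on which banner-ness bites.

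To conclude, I would pick $z_i \in G_i\setminus N(x)$ for each $i$. Since $z_i,z_{i+1}\in F_{i+1}$, the edge $\{z_i,z_{i+1}\}$ belongs to $\Delta$, and because both endpoints lie in $V\setminus N(x)$ it is an edge of $\Gamma$; similarly $\{u,z_0\}\subseteq F_0$ and $\{z_{m-1},v\}\subseteq F_m$ are edges of $\Gamma$. Concatenating yields a path $u,z_0,z_1,\ldots,z_{m-1},v$ inside $\Gamma$, proving that $\Gamma$ is connected.
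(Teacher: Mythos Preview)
Your argument is correct and essentially identical to the paper's first (elementary) proof: both invoke Barnette's Lemma to obtain a facet chain in $\astar_\Delta(x)$, then use the banner hypothesis to show each ridge $G_i=F_i\cap F_{i+1}$ must contain a vertex outside $N(x)$ (otherwise $G_i\cup\{x\}$ would be a critical spanning $(d+1)$-clique, placing $G_i$ in three facets), and string these vertices into a path in $\Gamma$. The paper also offers a second proof via Lefschetz duality for the homology-manifold case, but your proof matches the general one.
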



\begin{proof}[First proof.]
Let $u$ and $w$ be vertices not adjacent to $x$, and let
$\sigma$ and $\tau$ be facets of $\astar_{\Delta}(x)$
such that $u\in\sigma$ and $w\in\tau$.
From Lemma \ref{lem:astar_sc} we know that $\astar_\Delta(x)$ is strongly connected,
so we may choose a path $\sigma =\sigma_0 \rightarrow 
\sigma_1 \rightarrow \cdots\rightarrow  \sigma_k = \tau$
in the facet graph of $\astar_\Delta(x)$. 

This given, we want to  choose vertices $u_1, \ldots, u_k$ such that for all $i$:
\begin{enumerate}
\item $u_i\in \sigma_{i-1}\cap\sigma_i$, \item $u_i \notin \star(x) $
\end{enumerate}
If this is possible we are done, because then $ u=u_0 \rightarrow 
u_1 \rightarrow \cdots\rightarrow  u_k \rightarrow w$
is a path in $\astar(x) \setminus \star(x)$, that is, in $\Gamma$.

Suppose that for some $i$, such a choice of $u_i$ is impossible. Then
$$\sigma_{i-1}\cap \sigma_i \subseteq \star(x).
$$
The ridge $\mu :=\sigma_{i-1}\cap \sigma_i $ has the following properties:

1. $\mu\cup\{x\}$ is a $(d+1)$-clique,

2.  $\mu\cup\{x\}$ is critical, since $\mu\in\Delta$,

3. $\mu\cup\{x\}$ is not spanning, since $\Delta$ is a pseudomanifold and $\mu$ is already contained in the facets
$\sigma_{i-1} $ and $\sigma_i$.

This contradicts the assumption that $\Delta$ is banner.
%
%
%
%
\end{proof}

\begin{proof}[Second proof.]
Here we assume that $\Delta$ is a homology manifold. 
All homology groups are taken over $\Z_2$.
We have  that $H_d(\Delta) \cong \Z_2$, as is true for all pseudo-manifolds 
\cite[Exercise 43.5d]{Munkres1984}.

Let $\Sigma$ be the subcomplex of $\Delta$ induced on the 
set $N(x)$ and let  $\Gamma$ be the complex
induced on the complementary set of vertices,  as before. 
Consider the long exact sequence for relative homology 
\[
  H_d(\Sigma) \rightarrow H_d(\Delta) \rightarrow H_d(\Delta;\Sigma) \rightarrow H_{d-1}(\Sigma)
\]

 We have that $\star_{\Delta}(x)\subseteq \Sigma$, and our assumption that $\Delta$ is banner implies that $\star_{\Delta}(x)$ and $\Sigma$ 
have the same faces of dimensions $d$ and $d-1$. Because $\star_{\Delta}(x)$ is contractible, 
it follows that $H_d(\Sigma) = H_{d-1}(\Sigma) = 0$, and thus that $H_d(\Delta;\Sigma) \cong \Z_2$.

By Lefschetz duality \cite[Theorem 70.2]{Munkres1984}, we get that
$$H^0( ||\Delta|| \setminus ||\Sigma|| ) \cong H_d(\Delta;\Sigma) \cong \Z_2,$$
which means that the space $||\Delta|| \setminus ||\Sigma||$  is connected.
Finally, $\Gamma$ is a deformation retract of $||\Delta|| \setminus ||\Sigma||$  \cite[Lemma 70.1]{Munkres1984},
and is therefore also connected. This is equivalent to $\G(\Gamma)$ being connected.
\end{proof} 


We have now assembled all pieces needed to 
prove Theorem \ref{thm:bannergraph}.


\begin{proof}[Proof of Theorem \ref{thm:bannergraph}]
We use induction on $d$ and Liu's Lemma  \ref{liu}.
For $d=1$, $\Delta$ is a cycle graph and thus $2$-connected.

Assume that $d > 1$, and let $y$ and $z$ be a pair of vertices at distance two.
From Lemma \ref{lem:banner_Gnotcomplete} we know that $\G(\Delta)$ is not a complete graph, so
such pairs  exist. 

Let $x$ be a vertex of $\Delta$ such that $y$ and $z$ are contained in $\link_\Delta(x)$.
Because $\Delta$ is normal, $\link_\Delta(x)$ is a $(d-1)$-dimensional normal pseudomanifold. 
Using Proposition \ref{lem:banner_link} we see that $\link_\Delta(x)$ is banner. So, by induction, the  graph of $\link_\Delta(x)$ is $2(d-1)$-connected. This means that 
$\link_\Delta(x)$ contains more than $2(d-1)$ vertices, and
we can find $2(d-1)$ independent paths from $y$ to $z$ in $\link_\Delta(x)$. 

By Lemma  \ref{lem:banner_1} we may choose vertices $u\in N(y)\setminus N(x)$
and $w\in N(z)\setminus N(x)$. 
Thus, $u, w \in \Gamma$.
We obtain a path from $y$ to $z$ by first going to $u$. Then, by Lemma \ref{prop:banner_outsidepath},  we can continue along  a path from $u$ to $w$ with all vertices in $\Gamma$. Finally we go from $w$ to $z$. Note that this path has no interior vertices in $\link_\Delta(x)$.

Thus, together with the path $y \rightarrow  x \rightarrow z$ we have found $2d$ paths from $y$ to $z$ which by construction are independent. Also, $y$ and $z$ are not adjacent, so the path outside $\star_\Delta(x)$ has at least one inner vertex. This vertex and $x$ are both outside $\link_\Delta(x)$, so $\Delta$ has more than $2d$ vertices.
\end{proof}

\section{Complexes with banner links}

By definition,  a one-dimensional pseudomanifold is banner if and only if 
it is not the cycle graph $C_3$, the boundary of a triangle. 
However, every one-dimensional pseudomanifold -- including the boundary of a triangle -- is 
$2$-connected. This observation motivates including $C_3$ in the following definition.

\begin{definition}
Given a pure simplicial complex $\Delta$, let
\[
 b_{\Delta}= \min \{j  :  \link_\Delta(\sigma) \mbox{ is banner or $C_3$ for all } \sigma \in \Delta \mbox{ such
 that } |\sigma| = j \}.
\]
\end{definition}

We say that $b_{\Delta}$ is the {\em banner number} of $\Delta$.
Directly from the definition we get the following properties,
\begin{enumerate}
\item $0 \leq b_{\Delta} \leq d-1$,
\item $b_{\Delta} = 0 \Leftrightarrow \Delta$ is banner.
\end{enumerate}

\begin{lemma}
\label{lem:tentlinks}
Let $\Delta$ be a pure simplicial complex of dimension $d$ and let $\sigma \in \Delta$ be a face.
If $b_{\Delta} \geq |\sigma|$, then $b_{\link_\Delta(\sigma)} \leq b_{\Delta}- |\sigma|$.
\end{lemma}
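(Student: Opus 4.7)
The plan is to unwind the definitions, exploiting the standard identity that iterated links are links of unions. Write $s = |\sigma|$, $\Lambda = \link_\Delta(\sigma)$, and $k = b_\Delta - s$; the hypothesis $b_\Delta \geq s$ ensures $k \geq 0$. To conclude $b_\Lambda \leq k$ directly from the definition of the banner number, it suffices to verify that $\link_\Lambda(\tau')$ is banner or equal to $C_3$ for every face $\tau' \in \Lambda$ of cardinality $k$.

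The key step is the identity $\link_\Lambda(\tau') = \link_\Delta(\sigma \cup \tau')$, which I would record first. This is immediate from unwinding both sides: a face $\rho$ lies in $\link_\Lambda(\tau')$ iff $\rho \cap \tau' = \emptyset$ and $\rho \cup \tau' \in \Lambda$, iff $\rho$ is disjoint from $\sigma \cup \tau'$ and $\rho \cup \sigma \cup \tau' \in \Delta$, iff $\rho \in \link_\Delta(\sigma \cup \tau')$. The disjointness $\tau' \cap \sigma = \emptyset$ is built into membership $\tau' \in \Lambda$, so $\sigma \cup \tau'$ is genuinely a face of $\Delta$ of cardinality $s + k = b_\Delta$. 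The minimality clause in the definition of $b_\Delta$, applied to this face, then says that $\link_\Delta(\sigma \cup \tau')$ is banner or equal to $C_3$, and the identity above transports this property to $\link_\Lambda(\tau')$. Since $\tau'$ was arbitrary, the definition of $b_\Lambda$ yields $b_\Lambda \leq k = b_\Delta - s$.

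I do not anticipate any real obstacle here — the argument is essentially a one-line definition chase once the iterated-link identity is isolated. The only edge case worth noting is $k = 0$: there one may take $\tau' = \emptyset$, and the claim reduces to $\Lambda = \link_\Delta(\sigma)$ itself being banner or $C_3$, which is exactly what the defining property of $b_\Delta$ guarantees about the face $\sigma$ of size $s = b_\Delta$ in this situation.
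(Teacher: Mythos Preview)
Your proof is correct and follows essentially the same approach as the paper's: both rest on the iterated-link identity $\link_{\link_\Delta(\sigma)}(\tau) = \link_\Delta(\sigma\cup\tau)$ and then read off the conclusion from the definition of $b_\Delta$. If anything, your version is marginally tidier, since by checking only faces $\tau'$ of size exactly $k=b_\Delta-|\sigma|$ you hit $|\sigma\cup\tau'|=b_\Delta$ on the nose and can appeal directly to the defining property of $b_\Delta$, whereas the paper allows $|\tau|\geq b_\Delta-|\sigma|$ and therefore first invokes Proposition~\ref{lem:banner_link} to know that links of faces of size $\geq b_\Delta$ are also banner or $C_3$.
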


\begin{proof}

It is a consequence of Proposition  \ref{lem:banner_link} that the link of any face $\tau$ with 
$|\tau | \ge b_{\Delta}$ is banner or $C_3$.
Let $\tau \in \link_\Delta(\sigma)$ be any face with $|\tau| \geq b_{\Delta} - |\sigma| \geq 0$. Then
$$\link_{\link_\Delta(\sigma)} (\tau) = \link_\Delta(\sigma \cup \tau)$$
is banner, because $|\sigma \cup \tau| = |\sigma| + |\tau| \geq b_{\Delta}$.
\end{proof}

We can now prove Theorem \ref{main}, stated in the introduction.

\begin{proof}[Proof of Theorem \ref{main}]
We use induction on $b_{\Delta}$. If $b_{\Delta} = 0$, then $\Delta$ is banner and the statement follows from Theorem \ref{thm:bannergraph}.

Assume that $b_{\Delta} > 0$ and let $x \in \Delta$ be any vertex. Let $\Gamma = \link_\Delta(x)$. By induction, $\G(\Gamma)$ is $(2(d-1)-b_{\Gamma})$-connected. Using Lemma \ref{lem:tentlinks}, we see that $2(d-1)-b_{\Gamma} \geq 2d - b_{\Delta} - 1$. Thus, $\G(\Gamma)$ is $(2d-b_{\Delta}-1)$-connected and has at least $2d - b_{\Delta}$ vertices. Together with $x$, then $\G(\Delta)$ has at least $2d-b_{\Delta}+1$ vertices.

If $\G(\Delta)$ is complete, then we are done. If not, let $z,y \in \Delta$ be two vertices at distance $2$ in $\G(\Delta)$. Then we find a vertex $x \in \Delta$ such that $y,z \in \link_\Delta(x)$ and again, let $\Gamma = \link_\Delta(x)$. By induction and Lemma \ref{lem:tentlinks}, there are $2d - b_{\Delta} - 1$ independent paths from $y$ to $z$ in $\G(\Gamma)$. Together with the path $y - x - z$, this gives us $2d - b_{\Delta}$ independent paths from $y$ to $z$ in $\G(\Delta)$. By Liu's Lemma \ref{liu}, this proves the result.
\end{proof}

\bigskip

\noindent
{\bf Acknowledgments.}
We want to thank C. Athanasiadis for bringing Barnette's Lemma \ref{lem:astar_sc}
to our attention, A. Goodarzi for suggesting Example \ref{rem0}, and S. Klee and I. Novik
for spotting an error in an earlier version. We also thank the anonymous referee who made made several 
suggestions that led to improvements of the exposition.


\begin{thebibliography}{99}

\bibitem{Athanasiadis2011}
Christos~A. Athanasiadis.
\newblock Some combinatorial properties of flag simplicial pseudomanifolds and spheres.
\newblock {\em Ark. Mat.}, 49:17--29, 2011.

\bibitem{Balinski1961}
M.~L. Balinski.
\newblock On the graph structure of convex polyhedra in {$n$}-space.
\newblock {\em Pacific J. Math.}, 11:431--434, 1961.

\bibitem{Barnette1982}
David Barnette.
\newblock Decompositions of homology manifolds and their graphs.
\newblock {\em Israel J. Math.}, 41(3):203--212, 1982.

\bibitem{Diestel2005}
Reinhard Diestel.
\newblock {\em Graph theory}, volume 173 of {\em Graduate Texts in
  Mathematics}.
\newblock Springer-Verlag, Berlin, third edition, 2005.

\bibitem{Liu1990}
Gui~Zhen Liu.
\newblock Proof of a conjecture on matroid base graphs.
\newblock {\em Sci. China Ser. A}, 33(11):1329--1337, 1990.

\bibitem{Munkres1984}
James~R. Munkres.
\newblock {\em Elements of algebraic topology}.
\newblock Addison-Wesley Publishing Company, Menlo Park, CA, 1984.

\end{thebibliography}
\end{document}